\documentclass[12pt, twoside]{amsart}
\usepackage{geometry}
\usepackage[T1]{fontenc}
\usepackage[utf8]{inputenc}
\usepackage{amsmath,amssymb,mathtools}
\usepackage{newtxtext,newtxmath}
\usepackage{microtype}
\usepackage{enumitem}
\usepackage{needspace}
\usepackage{xcolor}
\usepackage{etoolbox}

\newif\ifshowchanges
\ifdefined\CleanVersion
  \showchangesfalse
\else
  \showchangestrue
\fi
\definecolor{RevisionRed}{RGB}{180,0,0}
\usepackage{hyperref}

\hypersetup{
  colorlinks=true,
  linkcolor=blue!45!black,
  citecolor=blue!45!black,
  urlcolor=blue!55!black,
  pdftitle={Characteristic-Free Characterizations of Projective Space for Smooth Toric Varieties},
  pdfauthor={Osamu Fujino and Hiroshi Sato}
}

\allowdisplaybreaks[2]
\numberwithin{equation}{section}

\newtheorem{theorem}{Theorem}[section]
\newtheorem{proposition}[theorem]{Proposition}
\newtheorem{lemma}[theorem]{Lemma}

\theoremstyle{remark}
\newtheorem{remark}[theorem]{Remark}

\theoremstyle{definition}
\newtheorem*{ack}{Acknowledgments}   

\newcommand{\cO}{\mathcal O}
\newcommand{\cA}{\mathcal A}
\newcommand{\cB}{\mathcal B}
\newcommand{\cC}{\mathcal C}
\newcommand{\cE}{\mathcal E}
\newcommand{\cL}{\mathcal L}
\newcommand{\cF}{\mathcal F}
\newcommand{\cK}{\mathcal K}
\newcommand{\cZ}{\mathcal Z}
\newcommand{\PP}{\mathbb P}
\newcommand{\RR}{\mathbb R}
\newcommand{\ZZ}{\mathbb Z}
\newcommand{\NEbar}{\operatorname{NE}}
\newcommand{\id}{\operatorname{id}}

\title[Characterizations of projective space for smooth toric varieties]{Characteristic-free characterizations of projective space for smooth toric varieties}
\author[Osamu Fujino]{Osamu Fujino}
\author[Hiroshi Sato]{Hiroshi Sato}
\date{2026/9/10, version 0.13}
\subjclass[2020]{Primary 14M25; Secondary 14J60, 14E30}
\keywords{toric variety, tangent bundle, ample vector bundle, toric Mori theory, primitive relation, divided powers, positive characteristic, projective space, Beauville's conjecture}

\address{Department of 
Mathematics, Graduate School of Science, 
Kyoto University, Kyoto 606-8502, Japan}
\email{fujino@math.kyoto-u.ac.jp}
\address{Department of Applied Mathematics, 
Faculty of Science, Fukuoka University, 
8-19-1, Nanakuma, Jonan-ku, Fukuoka 814-0180, Japan}
\email{hirosato@fukuoka-u.ac.jp}

\begin{document}

\begin{abstract}
We prove that a smooth projective toric variety over an algebraically closed field of arbitrary characteristic is isomorphic to a projective space if its tangent bundle contains an ample locally free subsheaf of positive rank. No torus-equivariant structure on this subsheaf is assumed. The proof uses primitive relations, the toric Euler sequence, and characteristic-free cohomological lifting. As an application of the same method, we establish the toric form of Beauville's cohomological characterization of projective spaces and quadrics, including the polarization. Without the toric hypothesis, both characterizations fail in characteristic two. 
\end{abstract}

\maketitle

\section{Introduction}The characterization of projective space is a classical and fundamental problem in algebraic geometry. The result of Kobayashi and Ochiai \cite{kobayashi-ochiai} and Mori's solution of the Hartshorne conjecture \cite{mori-ample-tangent} are particularly celebrated; these seminal works have profoundly influenced the subsequent development of higher-dimensional algebraic geometry.

The starting point of this paper is the remarkable theorem of Andreatta and Wi{\'s}niewski: if the tangent bundle of a smooth complex projective variety contains an ample locally free subsheaf of positive rank, then the variety is a projective space \cite{andreatta-wisniewski}. 
This theorem completely answers a question posed by Campana and
Peternell \cite[Question on p.~59]{CP98} and substantially strengthens
Mori's characterization of projective space by the ampleness of the
tangent bundle \cite{mori-ample-tangent}. 
Our main purpose is to give a direct proof in the toric setting, using primitive relations and the toric Euler sequence. Notably, this argument is purely algebraic and works over a field of arbitrary characteristic.

Throughout the paper, $\Bbbk$ denotes an algebraically closed field of arbitrary characteristic. Our main result is the following.

\begin{theorem}\label{thm:main}
Let $X=X_\Sigma$ be an $n$-dimensional smooth projective toric variety over $\Bbbk$. 
Assume that there is an ample vector bundle $\cE$ of rank 
$r>0$ admitting an injective morphism 
$\cE\hookrightarrow T_X$ of locally free sheaves. 
Then $X\simeq\PP^n_\Bbbk$. 
Moreover, under this isomorphism,
either 
$\mathcal E\simeq
\mathcal O_{\mathbb P^n_\Bbbk}(1)^{\oplus r}$, 
or $r=n$ and the given injective morphism is an isomorphism 
$\mathcal E\simeq T_X\simeq T_{\mathbb P^n_\Bbbk}$. 
\end{theorem}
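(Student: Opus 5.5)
We argue by restricting to invariant rational curves coming from a suitable primitive relation. Since $X$ is smooth projective toric and not a point (the case $n=0$ being trivial), its fan $\Sigma$ has a primitive collection whose primitive relation is extremal. By Batyrev's results we may choose a primitive collection $P=\{x_1,\dots,x_k\}$ with relation $x_1+\dots+x_k=0$, i.e.\ with zero degree. Such a $P$ yields a torus-invariant rational curve $C\simeq\PP^1$, the orbit closure of a wall in the corresponding extremal contraction, which is a fibration with fibres $\PP^{k-1}$. On $C$ the divisors $D_{x_i}$ have degree $1$, and all other $D_\rho$ have degree $0$. Restricting the toric Euler sequence
\[0\to \cO_X^{\oplus(N-n)}\to \bigoplus_\rho \cO_X(D_\rho)\to T_X\to 0\]
to $C$ then gives $T_X|_C\simeq \cO(2)\oplus\cO(1)^{\oplus(k-2)}\oplus\cO^{\oplus(n-k+1)}$.

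Next we exploit the positivity of $\cE$. Since $\cE|_C$ is ample of rank $r$, we have $\cE|_C\simeq\bigoplus\cO(a_j)$ with every $a_j\ge 1$. The morphism $\cE\hookrightarrow T_X$ is injective as a map of locally free sheaves, but its restriction to $C$ need not be injective. To handle this, we vary $C$ in its family: the fibres $F\simeq\PP^{k-1}$ of the contraction cover $X$. At a general point the map $\cE\to T_X$ is injective on fibres, so for a general fibre $F$ the map $\cE|_F\to T_X|_F$ is generically injective. It therefore induces a nonzero map from each summand $\cO(a_j)$ on a general line in $F$, and such a map must land in the positive part $\cO(2)\oplus\cO(1)^{k-2}=T_F|_{\text{line}}$. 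Applying this to lines through a general point, we conclude that $\cE_x\subset T_{F,x}$ for general $x$, and hence $r\le k-1$ and $\cE$ is tangent to the fibres of the contraction.

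The main obstacle is upgrading this to $k-1=n$, i.e.\ showing the contraction is to a point. The plan is to use the relative tangent sequence $0\to T_{X/Y}\to T_X\to \pi^*T_Y$. The composite $\cE\to\pi^*T_Y$ vanishes generically, hence identically, so $\cE\subset T_{X/Y}$. If $\dim Y>0$, we choose another extremal primitive relation whose curves are not contracted by $\pi$. Since $\Sigma$ is projective, some other extremal ray exists, and by Batyrev/Reid it has a primitive relation $x_1+\dots+x_k=\sum b_j y_j$ with $b_j>0$ and some $y$ possibly among the $x$'s. Repeating the restriction argument along a general curve $C'$ of that ray, we find that $\cE$ must also be tangent to the fibres of that contraction. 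Since the two fibrations have transverse fibres through a general point (their curve classes are independent), $\cE_x$ lies in an intersection that we expect to force $\cE=0$. Making this precise for rays of nonzero degree is the hard step. The first thing I would try is to compute $T_X|_{C'}$ via the Euler sequence, obtaining summands $\cO(2),\cO(1)$'s, $\cO$'s and $\cO(-b_j)$'s. I would then use the characteristic-free vanishing $H^0(\PP^1,\cO(-a))=0$ together with the lifting of invariant curves to deduce the needed tangency. This gives $Y=$ point, so $X\simeq\PP^n$.

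Finally, on $X=\PP^n$, a generically injective map from $\cO(a)$ to $T|_{\ell}=\cO(2)\oplus\cO(1)^{n-1}$ on general lines forces $\cE|_\ell$ to be of the form $\cO(1)^r$, or $r=n$ with $\cE|_\ell\simeq T|_\ell$. The first case gives $\cE(-1)$ trivial on all lines, so $\cE\simeq\cO(1)^{\oplus r}$. In the second case, the determinants have equal degree $n+1$, so the injection $\det\cE\to\det T_X$ is an isomorphism, and hence $\cE\simeq T_X$.
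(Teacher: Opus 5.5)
There is a genuine gap, and it starts at the first step. You take an \emph{extremal} primitive relation of the form $x_1+\cdots+x_k=0$, that is, an elementary contraction of fiber type (a $\PP^{k-1}$-bundle). Batyrev's results give a primitive collection whose elements sum to zero, but that relation need not be extremal. In general a smooth projective toric variety has no fiber-type extremal ray at all. Take the blow-up of $\PP^2_\Bbbk$ at its three torus-fixed points, with rays $\pm e_1$, $\pm e_2$, $\pm(e_1+e_2)$. Every invariant curve is a $(-1)$-curve, so every extremal contraction is a blow-down. The collection $\{e_1,-e_1\}$ does sum to zero, but its class is a fibre of a conic bundle to $\PP^1_\Bbbk$, and that fibre splits as a sum of two $(-1)$-curves. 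For such $X$ your argument never gets started, and excluding them is essentially the content of the theorem.

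Second, suppose a $\PP^{k-1}$-bundle $\pi\colon X\to Y$ does exist. Your argument that $\cE\subset T_{X/Y}$ and $r\leq k-1$ is correct, since the composite $\cE|_\ell\to\pi^*T_Y|_\ell\simeq\cO^{\oplus(n-k+1)}$ vanishes on every line $\ell$ in a fibre. But the decisive step $\dim Y=0$ is only announced, not proved. The mechanism you suggest fails: an extremal ray not contracted by $\pi$ can be birational. Its curves then sweep out a proper closed subset, there is no second family of fibres through a general point, and positivity along those curves constrains nothing at a general point of $X$.

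The paper never needs a fibration. It uses only $D_\rho\cdot\gamma_R\leq1$ and $\det\cE\cdot\gamma_R\geq r$, which hold for \emph{every} extremal ray. For $r<n$ it lifts the section of $\bigwedge^rT_X\otimes(\det\cE)^{-1}$ through the divided-power Koszul resolution of the Euler sequence, using toric Kodaira vanishing, to a section of $\cO_X(D_I)\otimes(\det\cE)^{-1}$ with $|I|=r$. The numerics then force $D_I\sim\det\cE$ and make every $D_\rho$ with $\rho\in I$ ample, so $X\simeq\PP^n_\Bbbk$. For $r=n$ it writes $-K_X\sim\det\cE+F$ and finds an extremal ray of anticanonical degree at least $n+1$.

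Your last paragraph also has a smaller hole. For $r<n$, the splitting type $\cO(2)\oplus\cO(1)^{\oplus(r-1)}$ on a general line is compatible with a generically injective map into $\cO(2)\oplus\cO(1)^{\oplus(n-1)}$. You therefore need a global input to get $\det\cE\simeq\cO(r)$. One option is the vanishing $H^0(\PP^n_\Bbbk,\bigwedge^rT_{\PP^n_\Bbbk}\otimes\cO(-r-1))=H^0(\PP^n_\Bbbk,\Omega^{n-r}(n-r))=0$; another is the paper's criterion.
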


An important feature is that no torus-equivariant structure on $\cE$ is assumed, and the injection $\cE\hookrightarrow T_X$ is not required to be equivariant. Nevertheless, the determinant of $\cE$ and its restrictions to invariant rational curves provide enough numerical information for the toric argument.
The quotient of $T_X$ by $\cE$ is not assumed to be locally free. In particular, the hypothesis is naturally phrased in terms of a locally free subsheaf rather than a subbundle.

For related characterizations of projective spaces among toric varieties, see, for example, \cite{mabuchi}, \cite[Theorem 7.1]{oda-tata}, \cite[Theorem~0.1]{fujino-notes}, \cite[Section~4]{fujino-osaka}, and \cite{wu-ample-tangent}. To the best of the authors' knowledge, the first nontrivial characterization of projective space for toric varieties was obtained by Mabuchi \cite[Theorem~4.1]{mabuchi}, whose result also solves the Hartshorne conjecture for toric varieties. A simple proof of Mabuchi's result using toric Mori theory is given in \cite[Section~4]{fujino-osaka}.

The same method also proves the toric form of Beauville's cohomological characterization of projective spaces and quadrics \cite[Remark~3.4]{beauville-symplectic}, established over $\mathbb C$ by Araujo, Druel, and Kov\'acs \cite[Theorem~1.1]{araujo-druel-kovacs}. More precisely, we obtain the following characteristic-free result, including the polarization.

\begin{theorem}[Beauville's conjecture for smooth toric varieties]\label{thm:beauville}
Let $X=X_\Sigma$ be an $n$-dimensional smooth projective toric variety over $\Bbbk$, and let $\cL$ be an ample line bundle on $X$. Suppose that
\[
H^0\left(X,\bigwedge^pT_X\otimes\cL^{-p}\right)\neq0
\]
for some integer $1\leq p\leq n$. Then one of the following holds:
\begin{enumerate}[label=\textup{(\roman*)}]
\item $(X,\cL)\simeq(\PP^n_\Bbbk,\cO_{\PP^n_\Bbbk}(1))$;
\item $p=n=1$ and $(X,\cL)\simeq(\PP^1_\Bbbk,\cO_{\PP^1_\Bbbk}(2))$;
\item $p=n=2$ and $(X,\cL)\simeq(\PP^1_\Bbbk\times\PP^1_\Bbbk,\cO(1,1))$.
\end{enumerate}
Conversely, case (i) satisfies the nonvanishing condition for every
$1\leq p\leq n$, whereas cases (ii) and (iii) satisfy it for $p=n$.
\end{theorem}

The polarized varieties in cases \textup{(ii)} and \textup{(iii)} are the smooth conic $(Q^1,\cO_{Q^1}(1))$ and the smooth quadric surface $(Q^2,\cO_{Q^2}(1))$, respectively.

The toric hypothesis is essential for these arbitrary-characteristic statements. In characteristic two, smooth quadrics of odd dimension at least three admit an ample line subbundle $\cO_Q(1)\hookrightarrow T_Q$. These examples, already noted by Wahl \cite[p.~316]{wahl-cohomological} and discussed by Furukawa \cite[Theorem~1.1 and Remark~2.8]{furukawa-hyperquadrics}, give counterexamples to the non-toric analogues of both Theorems~\ref{thm:main} and~\ref{thm:beauville}. For the latter, the nonvanishing occurs with $p=1<\dim Q$, whereas the quadric alternative requires $p=\dim Q$. We recall the construction in Remark~\ref{rem:positive-characteristic-counterexamples}.

For an overview of the proof of Theorem~\ref{thm:main}, put $L=\det\cE$ and $r=\operatorname{rank}\cE$. If $\gamma_R$ is the primitive integral class of an extremal ray and $D_\rho$ is an invariant prime divisor, then $D_\rho\cdot\gamma_R\leq1$, whereas the ampleness of $\cE$ gives $L\cdot\gamma_R\geq r$. Consequently, subtracting fewer than $r$ distinct invariant prime divisors from $L$ preserves ampleness. When $r<n$, the divided-power exterior resolution of the toric Euler sequence and toric Kodaira vanishing lift the determinant section to the corresponding exterior power of the middle term. The numerical inequalities then produce an ample invariant prime divisor, forcing the fan to be the fan of $\PP^n_\Bbbk$.

When $r=n$, the determinant map gives $-K_X\sim L+F$ with $F$ effective. If $F\neq0$, an extremal ray meeting $F$ positively has anticanonical degree at least $n+1$. If $F=0$, then $\cE\simeq T_X$, and the ampleness of $T_X$ gives the same lower bound. In either case, the associated primitive relation is the simplex relation.

For Theorem~\ref{thm:beauville}, we use the same numerical and cohomological argument, with $\cL^{\otimes p}$ in place of $\det\cE$. Only the top-degree case requires an additional elementary calculation: if $n\geq2$ and $-K_X\sim nA$ for an ample Cartier divisor $A$, an extremal primitive relation gives a $\PP^{n-1}_\Bbbk$-bundle over $\PP^1_\Bbbk$, and an intersection with a minimal section yields $n=2$ and the trivial bundle.

The fan-theoretic and Mori-theoretic statements used here are independent of the characteristic.  The first author develops toric Mori theory over an algebraically closed field of arbitrary characteristic in \cite{fujino-notes}; see also \cite{fujino-sato-introduction,fujinosato-notes2}. By contrast, for general torus-equivariant vector bundles, the existence of a nonzero global section can depend on the characteristic; see Remark~\ref{rem:characteristic-dependence} for an explicit example.

The paper is organized as follows. Section~\ref{sec:numerics} records the numerical consequences of extremal primitive relations. Section~\ref{sec:koszul} gives the characteristic-free divided-power Koszul--Schur complex and the cohomological lifting lemma. Section~\ref{sec:preparations} establishes the common cohomological criterion and the fan-theoretic results used in the proofs. Sections~\ref{sec:main-proof} and~\ref{sec:beauville} prove Theorems~\ref{thm:main} and~\ref{thm:beauville}, respectively.

\begin{ack}
The first author was partially supported by JSPS KAKENHI Grant Number JP23K20787. The second author was partially supported by JSPS KAKENHI Grant Number JP24K06679. 

The authors acknowledge the use of Rethlas and ChatGPT during the development of this work. Notably, the essential ideas and core arguments for the proofs of the results were generated with the assistance of these artificial intelligence tools. However, all mathematical arguments, proofs, and final verifications were rigorously formalized and carried out independently by the authors, who assume full responsibility for the contents of this paper.
\end{ack}

\section{Setup and numerical consequences of primitive relations}\label{sec:numerics}

From now on, let $\Bbbk$ be an algebraically closed field of arbitrary characteristic, and let $X=X_\Sigma$ be an $n$-dimensional smooth projective toric variety over $\Bbbk$, with $n\geq1$. Only these assumptions are standing; vector bundles and polarizations will be specified when needed.

We use standard notation from toric geometry; see, for example, \cite{cox-little-toric,fulton,oda}.  The toric Mori theory used below is valid over an algebraically closed field of arbitrary characteristic; see \cite{fujino-notes,fujino-sato-introduction,fujinosato-notes2}.  Let $\Sigma(1)$ denote the set of rays of $\Sigma$.  For $\rho\in\Sigma(1)$, let $v_\rho$ be its primitive generator and $D_\rho=D_{v_\rho}$ the corresponding invariant prime divisor. We write $N$ for the lattice of one-parameter subgroups and $M=\operatorname{Hom}(N,\ZZ)$ for its dual. We use additive notation for Cartier divisors and their associated line bundles when convenient.

Let $R\subset\NEbar(X)$ be an extremal ray, and let $\gamma_R$ be its primitive integral generator.  By toric Mori theory, there is an extremal primitive collection 
$P=\{v_1,\ldots,v_h\}$ 
whose primitive relation is
\begin{equation}\label{eq:primitive-relation}
v_1+\cdots+v_h=a_1w_1+\cdots+a_kw_k,
\qquad a_j\in\ZZ_{>0},
\end{equation}
and whose class is $\gamma_R$.  The right-hand side may be empty.  Moreover, $\gamma_R$ is represented by an invariant irreducible curve $C_R\simeq\PP^1_\Bbbk$.  We refer to \cite{reid-toric-morphisms,batyrev-tohoku,sato-towardfano,casagrande-contractible} for the primitive-relation description and to \cite[Section~3]{fujino-sato-introduction} for the characteristic-free toric Mori theory.

Under the usual identification of integral curve classes with integral relations among the ray generators, the coefficients in \eqref{eq:primitive-relation} are the intersection numbers with invariant divisors.  Thus
\[
D_{v_i}\cdot\gamma_R=1,
\qquad
D_{w_j}\cdot\gamma_R=-a_j,
\qquad
D_\rho\cdot\gamma_R=0
\]
when $v_\rho$ does not occur in \eqref{eq:primitive-relation}.  In particular,
\begin{equation}\label{eq:D-upper}
D_\rho\cdot\gamma_R\leq1
\qquad\text{for every }\rho\in\Sigma(1).
\end{equation}

If $\cE$ is an ample vector bundle of rank $r>0$ and $L=\det\cE$, then by Grothendieck's splitting theorem on $\PP^1_\Bbbk$,
\[
\cE|_{C_R}\simeq
\cO_{\PP^1_\Bbbk}(b_1)\oplus\cdots\oplus\cO_{\PP^1_\Bbbk}(b_r).
\]
Since $\cE$ is ample, $b_i\geq1$ for every $i$.  Hence
\begin{equation}\label{eq:L-lower}
L\cdot\gamma_R=\sum_{i=1}^r b_i\geq r.
\end{equation}
This estimate does not require a morphism from $\cE$ to $T_X$. In particular, when $\cE$ is a locally free subsheaf of $T_X$, no injectivity of its restriction to $C_R$ is needed.

Likewise, if $\cL\simeq\cO_X(A)$ is an ample line bundle and $p\geq1$, then
\begin{equation}\label{eq:polarization-lower}
(pA)\cdot\gamma_R=p(A\cdot\gamma_R)\geq p,
\end{equation}
since $A\cdot\gamma_R$ is a positive integer. We formulate the common numerical consequence for an arbitrary Cartier divisor. 
For a subset $J\subset\Sigma(1)$, set $D_J:=\sum_{\rho\in J}D_\rho$.

\begin{proposition}\label{prop:L-minus-DJ}
Let $D$ be a Cartier divisor on $X$, and let $m\geq1$ be an integer such that
\begin{equation}\label{eq:numerical-lower}
D\cdot\gamma_R\geq m
\qquad\text{for every extremal ray }R.
\end{equation}
For a subset $J\subset\Sigma(1)$, the divisor $D-D_J$ is ample if $|J|<m$, and nef if $|J|=m$.
\end{proposition}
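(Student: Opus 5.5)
For each extremal ray $R$, we combine \eqref{eq:D-upper} with \eqref{eq:numerical-lower}. Each $D_\rho$ satisfies $D_\rho\cdot\gamma_R\leq1$. Summing over $\rho\in J$ gives $D_J\cdot\gamma_R\leq|J|$. Hence
\[
(D-D_J)\cdot\gamma_R\geq m-|J|,
\]
which is $\geq1>0$ when $|J|<m$ and $\geq0$ when $|J|=m$. This intersection bound is the whole content; the remaining work is to pass from positivity on the generators $\gamma_R$ to positivity on all of $\NEbar(X)$.

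For that passage we use toric Mori theory, valid in arbitrary characteristic by \cite{fujino-notes,fujino-sato-introduction}. Since $X$ is a smooth projective toric variety, the Mori cone $\NEbar(X)$ is a rational polyhedral cone. It is generated by the classes of invariant curves, and it equals the sum of its finitely many extremal rays $R$, each spanned by $\gamma_R$. So every effective curve class is a nonnegative combination of the $\gamma_R$. It follows that $D-D_J$ is nef when $|J|\leq m$.

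For ampleness when $|J|<m$, we apply Kleiman's criterion in the polyhedral case. A Cartier divisor that is strictly positive on every extremal ray of the closed polyhedral cone $\NEbar(X)$ is strictly positive on $\NEbar(X)\setminus\{0\}$. It is therefore ample. On a complete toric variety this is the toric Nakai/Kleiman criterion: a Cartier divisor is ample if and only if its degree is positive on every invariant curve. Each invariant curve class is a nonnegative, not identically zero, combination of the $\gamma_R$, so the positivity transfers.

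We expect no real obstacle here. The only point needing care is to cite that, in arbitrary characteristic, $\NEbar(X)$ is polyhedral and spanned by the $\gamma_R$, and that the toric ampleness criterion holds. We take both from the characteristic-free toric Mori theory already invoked in this section.
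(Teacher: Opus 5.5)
Your proposal is correct and follows the paper's own proof. You derive $(D-D_J)\cdot\gamma_R\geq m-|J|$ from \eqref{eq:D-upper} and the hypothesis, then conclude via the rational polyhedrality of $\NEbar(X)$, which is generated by its finitely many extremal rays, together with Kleiman's criterion.
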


\begin{proof}
By \eqref{eq:D-upper} and \eqref{eq:numerical-lower}, $(D-D_J)\cdot\gamma_R\geq m-|J|$ 
for every extremal ray $R$. The Kleiman--Mori cone $\NEbar(X)$ is rational polyhedral and is generated by its finitely many extremal rays. The assertion follows from Kleiman's criterion.
\end{proof}

We also record an elementary observation.

\begin{lemma}\label{lem:effective-antinef}
Let $X$ be a projective variety of dimension $n>0$, 
and let $F$ be a nonzero effective Cartier divisor on $X$. Then $-F$ is not nef.
\end{lemma}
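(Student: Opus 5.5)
We will show that $-F$ has negative degree on some curve. Write $F=\sum_i c_i F_i$ with $c_i>0$ and the $F_i$ distinct prime divisors, and fix an ample Cartier divisor $H$ on $X$. Since $F\neq0$, at least one component $F_1$ occurs with $c_1>0$.

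\emph{Case $n=1$.} Here $X$ is a projective curve and $F$ is a nonzero effective Cartier divisor, so $\deg F>0$. Then $-F\cdot X=-\deg F<0$, and $-F$ is not nef.

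\emph{Case $n\geq2$.} We use the curve
\[
C:=F_1\cdot H^{n-2},
\]
taking multiples of $H$ so that it is very ample and choosing general members. Since $F_1$ is a prime divisor, $C$ is an effective $1$-cycle on $F_1$. It suffices to prove
\[
F\cdot H^{n-1}>0,
\]
because we then take the complete-intersection curve $C'=H^{n-1}$, cut out by general members of $|H|$. This avoids having to decide the sign of $F\cdot C$ for a curve $C$ lying inside $F_1$, where the self-intersection of $F_1$ could be negative. For $C'$ we get $F\cdot C'=F\cdot H^{n-1}$. We compute this restriction-wise:
\[
F\cdot H^{n-1}=\sum_i c_i\,\bigl(H|_{F_i}\bigr)^{n-1}.
\]
Each $H|_{F_i}$ is ample on the $(n-1)$-dimensional projective variety $F_i$, so each term is positive. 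Hence $-F\cdot C'<0$, and $-F$ is not nef.

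The only point needing care is the interpretation of intersection numbers when $X$ is singular and $F$ is merely Cartier. Our $X$ is an arbitrary projective variety, so we must use the standard fact that for an effective Cartier divisor the intersection with $H^{n-1}$ equals the degree of $H^{n-1}$ on the associated Weil cycle $[F]=\sum c_iF_i$. We must also check that the $c_i$ are positive. This holds because $F$ is a nonzero effective Cartier divisor: its local equations are regular functions that are not units, so they vanish along some prime divisor.

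Alternatively, avoiding this, we may pick a general complete-intersection curve $C'$ of very ample divisors. It meets $\operatorname{Supp}F$ in finitely many points, and it meets it nontrivially because $\operatorname{Supp}F$ has codimension one and $C'$ meets every divisor. Then $\cO_X(F)|_{C'}=\cO_{C'}(F|_{C'})$ with $F|_{C'}$ a nonzero effective divisor on $C'$, so its degree is positive. This is the argument we would actually write.
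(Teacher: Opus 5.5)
Your proposal is correct and is essentially the paper's argument. The paper just notes that $F\cdot H^{n-1}>0$ for an ample $H$, while nefness of $-F$ would force $(-F)\cdot H^{n-1}\geq0$; your componentwise formula (or the general complete-intersection curve $C'$) only supplies the justification of this positivity that the paper treats as standard, and the abandoned detour through $C=F_1\cdot H^{n-2}$ can simply be deleted.
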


\begin{proof}
For an ample divisor $H$, one has $F\cdot H^{n-1}>0$, while the nefness of $-F$ would imply $(-F)\cdot H^{n-1}\geq0$.
\end{proof}

\section{A divided-power Koszul--Schur complex and a lifting lemma}\label{sec:koszul}

We begin with the homological algebra input. 
For a vector bundle $\mathcal A$, write 
$\Gamma^q\mathcal A=(\operatorname{Sym}^q\mathcal A^*)^*$ 
for its $q$-th divided power. 
For the basic properties of divided powers, see, for example, \cite[Section~1.1]{weyman-book}. The complex below is the characteristic-free form of the exterior-power resolution needed later and is a special case of a Schur complex; see \cite{akin-buchsbaum-weyman} 
(see also \cite[Section 2.4, p. 76]{weyman-book}). 
We include a proof in order to make both its exactness in positive characteristic and the lifting step used below transparent.

\begin{lemma}[Divided-power Koszul--Schur complex]\label{lem:exterior-complex}
Let
\begin{equation}\label{eq:ABC}
0\longrightarrow\cA\xrightarrow{\iota}\cB\xrightarrow{\pi}\cC\longrightarrow0
\end{equation}
be a short exact sequence of vector bundles on a scheme $S$.  For every integer $m\geq1$, there is an exact complex
\begin{equation}\label{eq:exterior-complex}
\begin{split}
0\longrightarrow\Gamma^m\cA
&\xrightarrow{d_m}\Gamma^{m-1}\cA\otimes\cB
\longrightarrow\cdots\longrightarrow
\Gamma^q\cA\otimes\bigwedge^{m-q}\cB\\
&\xrightarrow{d_q}\Gamma^{q-1}\cA\otimes\bigwedge^{m-q+1}\cB
\longrightarrow\cdots\longrightarrow
\bigwedge^m\cB
\xrightarrow{\wedge^m\pi}\bigwedge^m\cC
\longrightarrow0.
\end{split}
\end{equation}
For $1\leq q\leq m$, the differential $d_q$ is the composite
\begin{equation}\label{eq:divided-differential}
\begin{split}
\Gamma^q\cA\otimes\bigwedge^{m-q}\cB
&\xrightarrow{\Delta_{q-1,1}\otimes\id}
\Gamma^{q-1}\cA\otimes\cA\otimes\bigwedge^{m-q}\cB\\
&\xrightarrow{\id\otimes(\iota\wedge-)}
\Gamma^{q-1}\cA\otimes\bigwedge^{m-q+1}\cB,
\end{split}
\end{equation}
where $\Delta_{q-1,1}$ is the $(q-1,1)$ component of the standard comultiplication on the divided-power algebra (see \eqref{eq:divided-comultiplication} below).
\end{lemma}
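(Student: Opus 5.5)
Exactness is a local statement and the formation of the complex is compatible with restriction to opens, so I will work locally on $S$. On a Zariski open cover the sequence \eqref{eq:ABC} splits, with $\cA$, $\cB$, $\cC$ free over a commutative ring $R$. Then $\cB\simeq\cA\oplus\cC$, with $\iota$ the inclusion and $\pi$ the projection. The whole construction involves only divided powers, exterior powers and the comultiplication, all of which commute with base change. So I may even reduce to the universal case $R=\ZZ$ with $\cA=\ZZ^a$, $\cC=\ZZ^c$.

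As a preliminary, I will check that the composite \eqref{eq:divided-differential} squares to zero and that $\wedge^m\pi\circ d_1=0$. The second is immediate because $\pi\circ\iota=0$. For the first, I use coassociativity of $\Delta$. The composite $d_{q-1}d_q$ factors through $\Delta_{q-2,2}$ followed by the component $\Gamma^2\cA\to\cA\otimes\cA$ and then $x\otimes y\mapsto x\wedge y$. The image of $\Gamma^2\cA\to\cA\otimes\cA$ is the symmetric tensors, which are spanned by $x\otimes y+y\otimes x$ and $x\otimes x$. Both of these die under the wedge product. This argument is characteristic-free, and it is exactly why divided powers, rather than symmetric powers, appear.

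For exactness in the split case, I decompose $\bigwedge^{j}\cB=\bigoplus_{i}\bigwedge^i\cA\otimes\bigwedge^{j-i}\cC$. The differential $d_q$ only moves an $\cA$-factor from $\Gamma$ to $\bigwedge$ and leaves the $\bigwedge\cC$ part untouched. Hence the complex \eqref{eq:exterior-complex} decomposes as a direct sum over $s$ of
\[
K_{m-s}(\cA)\otimes\bigwedge^s\cC,
\]
where $K_t(\cA)\colon 0\to\Gamma^t\cA\to\Gamma^{t-1}\cA\otimes\cA\to\cdots\to\bigwedge^t\cA\to0$ is the divided-power Koszul complex. In this decomposition the final map $\wedge^m\pi$ is the projection onto the summand $\bigwedge^m\cC$, which corresponds to $t=0$. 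So it suffices to prove that $K_t(\cA)$ is exact for $t\geq1$ over any ring, for $\cA$ free.

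This exactness is the main obstacle. My plan is to prove it by induction on $\operatorname{rank}\cA$, using the tensor factorization for $\cA=\cA'\oplus R e$:
\[
\Gamma(\cA)\otimes\bigwedge(\cA)\;=\;\bigl(\Gamma(\cA')\otimes\bigwedge(\cA')\bigr)\otimes\bigl(\Gamma(Re)\otimes\bigwedge(Re)\bigr)
\]
as differential graded objects. The differential is a derivation-type map compatible with this tensor decomposition, because $\Delta$ is multiplicative on divided-power algebras.

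For rank one the complex is $R e^{(t)}\to R e^{(t-1)}\otimes e$, with $e^{(t)}\mapsto e^{(t-1)}\otimes e$. This is an isomorphism for $t\geq1$, so the total complex $\Gamma(Re)\otimes\bigwedge(Re)$ has homology $R$, concentrated in degree $0$. The Künneth formula for complexes of free modules then gives that the total complex for $\cA$ has homology $R$ in degree $0$, which means $K_t(\cA)$ is exact for every $t\geq1$.

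The delicate point is to verify that the differential is indeed the tensor product differential, with the correct Koszul signs. I would check this on the basis elements $e^{(\alpha)}\otimes e_{I}$, where it reduces to the formula $\Delta_{q-1,1}(e^{(\alpha)})=\sum_i e^{(\alpha-\epsilon_i)}\otimes e_i$.
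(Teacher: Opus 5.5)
Your proposal is correct. Its outer structure is the same as the paper's: local splitting $\cB\simeq\cA\oplus\cC$, decomposition by the number of exterior factors coming from $\cC$, and reduction to exactness of the divided-power Koszul complex $K_t(\cA)$ for $t\geq1$.

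Where you diverge is in proving that $K_t(\cA)$ is exact over an arbitrary ring. The paper fixes a basis and splits the complex by multidegree $\alpha$. It identifies the $\alpha$-piece with the augmented simplicial cochain complex of the simplex on $S_\alpha=\{i\mid\alpha_i>0\}$ and writes down an explicit contracting homotopy. You instead factor $\Gamma(\cA)\otimes\bigwedge(\cA)$ as a tensor product of rank-one complexes and reduce to the map $e^{(t)}\mapsto e^{(t-1)}\otimes e$.

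The two routes are closely related. The multidegree-$\alpha$ part of your tensor product is exactly the tensor product of $|S_\alpha|$ copies of $R\xrightarrow{\id}R$, which is the paper's simplex complex. The paper's version is fully explicit and needs nothing beyond checking $dh+hd=\id$. Yours is more structural and isolates the single place where divided powers matter: the rank-one map is an isomorphism, whereas its symmetric-power analogue is multiplication by $t$. The price is a sign check for the tensor identification and a Künneth step.

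For the Künneth step over an arbitrary $R$, the cleanest argument is that the rank-one complex is $R$ in degree $0$ plus a contractible complex, so tensoring with it does not change homology. This makes your reduction to $R=\ZZ$ unnecessary. If you keep that reduction, you should say why exactness passes along $\ZZ\to R$, for instance because a bounded exact complex of finitely generated free $\ZZ$-modules is split exact.

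Two small remarks on your $d^2=0$ paragraph:
\begin{itemize}
\item Your argument, with $\Delta_{1,1}(\Gamma^2\cA)$ spanned by $x\otimes y+y\otimes x$ and $x\otimes x$, is more careful than the paper's ``cancellation'' wording. The diagonal terms $e_i\otimes e_i$ are killed by $e_i\wedge e_i=0$, not by cancellation.
\item Your claim that this is why divided powers rather than symmetric powers appear is inaccurate. The symmetric-power Koszul differential also squares to zero in every characteristic. Divided powers are needed for exactness, as your own rank-one computation shows.
\end{itemize}
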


\begin{proof}
The maps in \eqref{eq:divided-differential} form a complex. Indeed, the twofold comultiplication on divided powers is cocommutative, whereas exterior multiplication is alternating. Thus the two terms obtained by moving two factors from the divided-power part to the exterior part cancel. Moreover, $(\wedge^m\pi)d_1=0$.

Exactness is local on $S$, so we may split \eqref{eq:ABC} and write $\cB\simeq\cA\oplus\cC$. Decompose \eqref{eq:exterior-complex} according to the number $j$ of exterior factors coming from $\cC$. 
The component with $j=m$ is 
the identity map on 
$\bigwedge^m\cC$. For $0\leq j<m$, put $s=m-j$. The $j$-component is $\bigwedge^j\cC$ tensored with the homogeneous divided-power Koszul complex
\begin{equation}\label{eq:divided-koszul}
\begin{split}
0\longrightarrow\Gamma^s\cA
&\longrightarrow\Gamma^{s-1}\cA\otimes\cA
\longrightarrow\cdots\longrightarrow
\Gamma^{s-p}\cA\otimes\bigwedge^p\cA\\
&\longrightarrow\cdots\longrightarrow
\bigwedge^s\cA
\longrightarrow0.
\end{split}
\end{equation}
It remains to prove that \eqref{eq:divided-koszul} is exact over an arbitrary 
base ring $R$.

Work locally and choose a basis $e_1,\ldots,e_a$ of $\cA$. For a multi-index $\alpha=(\alpha_1,\ldots,\alpha_a)\in\mathbb N^a$, write 
$e^{[\alpha]}=e_1^{[\alpha_1]}\cdots e_a^{[\alpha_a]}$. 
If $\epsilon_i$ denotes the $i$-th standard basis vector of $\mathbb N^a$, then the divided-power comultiplication satisfies
\begin{equation}\label{eq:divided-comultiplication}
\Delta_{q-1,1}\bigl(e^{[\alpha]}\bigr)
=
\sum_{\alpha_i>0}e^{[\alpha-\epsilon_i]}\otimes e_i
\qquad (|\alpha|=q).
\end{equation}
In particular, no numerical coefficient occurs in the differential.

The complex \eqref{eq:divided-koszul} is multigraded. 
Fix $\alpha$ with $|\alpha|=s$, and set 
$S_\alpha=\{i\mid \alpha_i>0\}$. 
For $I=\{i_1<\cdots<i_p\}\subseteq S_\alpha$, put 
$\epsilon_I=\sum_{i\in I}\epsilon_i$ and 
$e_I=e_{i_1}\wedge\cdots\wedge e_{i_p}$. 
The multidegree-$\alpha$ part in exterior degree $p$ has the basis
\[
x_I := e^{[\alpha-\epsilon_I]}\otimes e_I,
\qquad I\subseteq S_\alpha,\quad |I|=p.
\]
By \eqref{eq:divided-comultiplication}, the differential $d$ is given by
\[
d(x_I)
=
\sum_{\ell\in S_\alpha\setminus I}
(-1)^{\varepsilon(\ell,I)}
 x_{I\cup\{\ell\}},
\]
where $\varepsilon(\ell,I)=|\{i\in I\mid i<\ell\}|$. Hence the multidegree-$\alpha$ part is precisely the augmented simplicial cochain complex of the simplex with vertex set $S_\alpha$. 

Since $s>0$, the set $S_\alpha$ is nonempty. We explicitly construct a contracting homotopy by fixing a vertex $k \in S_\alpha$. Define an $R$-linear map $h$ of degree $-1$ on the basis elements by
\[
h(x_I) =
\begin{cases}
(-1)^{\varepsilon(k, I\setminus\{k\})} x_{I\setminus\{k\}} & \text{if } k \in I, \\
0 & \text{if } k \notin I.
\end{cases}
\]
We verify that $dh + hd = \mathrm{id}$. If $k \notin I$, then $h(x_I) = 0$ and thus $dh(x_I) = 0$. In the expansion of $d(x_I)$, the only term not annihilated by $h$ is the one corresponding to $\ell = k$. Therefore,
\[
hd(x_I) = h\bigl((-1)^{\varepsilon(k, I)} x_{I \cup \{k\}}\bigr) = (-1)^{\varepsilon(k, I)}(-1)^{\varepsilon(k, I)} x_I = x_I,
\]
yielding $(dh+hd)(x_I) = x_I$. If instead $k \in I$, let $J = I \setminus \{k\}$, so that $h(x_I) = (-1)^{\varepsilon(k, J)} x_J$. Applying the differential gives
\[
dh(x_I) = (-1)^{\varepsilon(k, J)} \sum_{\ell \in S_\alpha \setminus J} (-1)^{\varepsilon(\ell, J)} x_{J \cup \{\ell\}}.
\]
The term in this sum corresponding to $\ell = k$ is $(-1)^{2\varepsilon(k, J)} x_{J \cup \{k\}} = x_I$. For all terms with $\ell \neq k$, the alternating sum properties of the simplicial complex ensure they are exactly cancelled by the terms of $hd(x_I)$, because applying $h$ to $x_{I \cup \{\ell\}}$ removes $k$ and induces the opposite sign. 

Consequently, $(dh + hd)(x_I) = x_I$ holds in all cases. This explicit homotopy involves no scalar denominators, proving that the augmented complex is exact over an arbitrary base ring. Summing over all $\alpha$ proves the exactness of \eqref{eq:divided-koszul}, and hence of \eqref{eq:exterior-complex}.
\end{proof}

\begin{remark}\label{rem:divided-powers}
When all relevant factorials are invertible, divided and symmetric powers are naturally identified after the usual normalization, and Lemma~\ref{lem:exterior-complex} becomes the familiar complex written with symmetric powers.  In positive characteristic, however, the symmetric-power version need not be exact.  For example, if $\cA$ has rank one and $m=p=\operatorname{char}\Bbbk$, its first differential sends 
$a^p\mapsto p\,a^{p-1}\otimes\iota(a)=0$. 
The divided-power differential instead sends 
$a^{[p]}\mapsto a^{[p-1]}\otimes\iota(a)$, 
which is precisely the characteristic-free replacement needed here.
\end{remark}

The following elementary lemma isolates the cohomological step that turns this resolution into a lifting statement.

\begin{lemma}[Cohomological lifting]\label{lem:lifting}
Suppose that
\[
0\longrightarrow\cK_m\longrightarrow\cK_{m-1}
\longrightarrow\cdots\longrightarrow\cK_0
\longrightarrow\cF\longrightarrow0
\]
is an exact complex of vector bundles on a projective variety $X$. 
If 
$H^q(X,\cK_q)=0$ for 
every $1\leq q\leq m$, 
then the natural map 
$H^0(X,\cK_0)\to H^0(X,\cF)$ 
is surjective.
\end{lemma}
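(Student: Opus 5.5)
The plan is to break the long exact complex into short exact sequences and chase cohomology, using the hypothesis $H^q(X,\cK_q)=0$ to kill each successive obstruction. For $0\leq q\leq m-1$, set
\[
\cZ_q:=\ker\bigl(\cK_q\to\cK_{q-1}\bigr)=\operatorname{im}\bigl(\cK_{q+1}\to\cK_q\bigr),
\]
with the convention $\cK_{-1}=\cF$, so that $\cZ_0=\ker(\cK_0\to\cF)$. Exactness of the complex gives short exact sequences of coherent sheaves
\[
0\longrightarrow\cZ_q\longrightarrow\cK_q\longrightarrow\cZ_{q-1}\longrightarrow0
\qquad(1\leq q\leq m-1),
\]
together with $0\to\cZ_0\to\cK_0\to\cF\to0$ and, at the left end, an isomorphism $\cK_m\simeq\cZ_{m-1}$, since $\cK_m\to\cK_{m-1}$ is injective. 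The $\cZ_q$ need not be locally free, but this is irrelevant because only coherent-sheaf cohomology on $X$ is used.

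From the last short exact sequence, the long exact sequence shows that $H^0(X,\cK_0)\to H^0(X,\cF)$ is surjective once $H^1(X,\cZ_0)=0$. I would therefore prove, by descending induction on $q$, the claim $H^{q+1}(X,\cZ_q)=0$ for $0\leq q\leq m-1$. The base case is $q=m-1$: here $H^m(X,\cZ_{m-1})\simeq H^m(X,\cK_m)=0$ by hypothesis. For the inductive step with $1\leq q\leq m-1$, the long exact sequence of $0\to\cZ_q\to\cK_q\to\cZ_{q-1}\to0$ contains
\[
H^q(X,\cK_q)\longrightarrow H^q(X,\cZ_{q-1})\longrightarrow H^{q+1}(X,\cZ_q).
\]
The left term vanishes by hypothesis and the right term vanishes by the induction hypothesis, so $H^q(X,\cZ_{q-1})=0$. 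This is exactly the claim for $q-1$, and the induction terminates at $H^1(X,\cZ_0)=0$, which completes the argument.

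The only delicate point is the bookkeeping of indices: the degree of the cohomology must rise by exactly one each time we move one step left in the complex, so that the hypothesis is consumed exactly in the form $H^q(X,\cK_q)=0$. The degenerate case $m=1$ also needs a check. There the complex is $0\to\cK_1\to\cK_0\to\cF\to0$, so $\cZ_0\simeq\cK_1$, and $H^1(X,\cK_1)=0$ gives the conclusion directly, consistent with the base case above. Projectivity of $X$ is not actually needed beyond ensuring the cohomology groups make sense; the argument is purely formal.
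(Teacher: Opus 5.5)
Your proof is correct and is essentially the paper's argument: your $\cZ_q$ is the paper's $\cZ_{q+1}$, and your descending induction $H^{q+1}(X,\cZ_q)=0$ is the paper's chain of injections $H^1(X,\cZ_1)\hookrightarrow H^2(X,\cZ_2)\hookrightarrow\cdots\hookrightarrow H^m(X,\cZ_m)=H^m(X,\cK_m)=0$, read from the right. One small aside: since $\cF$ is itself a vector bundle, the kernels $\cZ_q$ are in fact locally free, though, as you say, nothing depends on this.
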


\begin{proof}
Put $\cZ_0=\cF$ and, for $0\leq q\leq m-1$, let 
$\cZ_{q+1}=\ker(\cK_q\to\cZ_q)$. 
Exactness gives short exact sequences 
$0\to\cZ_{q+1}\to\cK_q
\to\cZ_q\to 0$, 
and $\cZ_m\simeq\cK_m$.  The assumed vanishings and the associated long exact sequences yield injections
\[
H^1(X,\cZ_1)\hookrightarrow H^2(X,\cZ_2)
\hookrightarrow\cdots\hookrightarrow H^m(X,\cZ_m)
=H^m(X,\cK_m)=0.
\]
Thus $H^1(X,\cZ_1)=0$, and the exact sequence 
$0\to\cZ_1\to\cK_0
\to\cF\to 0$ 
gives the desired surjectivity.
\end{proof}

\section{Vanishing and fan-theoretic criteria}\label{sec:preparations}

Let $D$ be a Cartier divisor and $m\geq1$ an integer satisfying \eqref{eq:numerical-lower}. The toric Euler sequence is
\begin{equation}\label{eq:euler}
0\longrightarrow
\cA:=\cO_X^{\oplus\rho(X)}
\longrightarrow
\cB:=\bigoplus_{\rho\in\Sigma(1)}\cO_X(D_\rho)
\longrightarrow T_X\longrightarrow0,
\end{equation}
where $\rho(X)$ denotes the Picard number; see \cite[Section~8.1]{cox-little-toric}. Apply Lemma~\ref{lem:exterior-complex} to \eqref{eq:euler} and tensor by $\cO_X(-D)$. For $0\leq q\leq m$, put
\[
\cK_q:=\Gamma^q\cA\otimes\bigwedge^{m-q}\cB\otimes\cO_X(-D).
\]
We obtain an exact complex
\begin{equation}\label{eq:twisted-complex}
0\longrightarrow\cK_m\longrightarrow\cdots\longrightarrow\cK_0
\longrightarrow\bigwedge^mT_X\otimes\cO_X(-D)\longrightarrow0.
\end{equation}
Since $\cA$ is trivial and $\cB$ is a direct sum of line bundles, $\cK_q$ is a direct sum, with multiplicities, of line bundles 
\[
\cO_X(D_J-D),\qquad |J|=m-q.
\]

\begin{lemma}\label{lem:vanishing}
In the above setting, assume $m<n$. For $1\leq q\leq m$ and $J\subset\Sigma(1)$ with $|J|=m-q$, one has
\[
H^q\bigl(X,\cO_X(D_J-D)\bigr)=0.
\]
Consequently, $H^q(X,\cK_q)=0$ for $1\leq q\leq m$.
\end{lemma}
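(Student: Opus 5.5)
The plan is to turn the vanishing of $H^q(X,\cO_X(D_J-D))$ into the vanishing of a higher cohomology group of an ample line bundle twisted by the canonical bundle, and then apply toric Kodaira vanishing. Toric Kodaira vanishing is characteristic-free: for an ample (or nef and big) divisor $H$ on a smooth projective toric variety, $H^i(X,\cO_X(K_X+H))=0$ for $i>0$. Equivalently, by Serre duality, $H^i(X,\cO_X(-H))=0$ for $i<n$.

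The key steps are as follows.

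\begin{enumerate}[label=\textup{(\arabic*)}]
\item Since $|J|=m-q\leq m-1<m$, Proposition~\ref{prop:L-minus-DJ} applies and shows that $H:=D-D_J$ is ample.
\item The line bundle in question is $\cO_X(-H)$. Serre duality gives
\[
H^q(X,\cO_X(-H))\simeq H^{n-q}(X,\cO_X(K_X+H))^{*}.
\]
\item Since $1\leq q\leq m<n$, we have $n-q\geq1$. Hence the right-hand side vanishes by toric Kodaira vanishing, which holds over any algebraically closed field; see e.g.\ \cite{fujino-notes} or \cite[Theorem~9.3.1]{cox-little-toric}. Alternatively, we can use Batyrev--Borisov / Demazure vanishing in the form $H^i(X,\cO_X(-H))=0$ for $i\neq n$ when $H$ is ample. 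This form follows from the combinatorial description of cohomology of torus-invariant line bundles and is manifestly characteristic-free, because the cohomology is computed degree by degree in $M$ by reduced cohomology of certain subsets of the support.
\item The consequence is formal. $\cK_q$ is a finite direct sum of the line bundles $\cO_X(D_J-D)$ with $|J|=m-q$, because $\Gamma^q\cA$ is free. Cohomology commutes with finite direct sums, so $H^q(X,\cK_q)=0$.
\end{enumerate}

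I expect the only real obstacle to be justifying the vanishing theorem in arbitrary characteristic. The first attempt will be to cite the combinatorial computation: for $H$ ample, $H^i(X,\cO_X(-H))_u$ is the reduced cohomology of $\operatorname{Conv}$-type sets that are contractible or a sphere of dimension $n-1$. This yields vanishing for $i<n$ independently of $\Bbbk$. The numerics require care only in the strict inequality $|J|<m$, which holds precisely because $q\geq1$. The hypothesis $m<n$ is used only to guarantee $q<n$.
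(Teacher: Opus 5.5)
Your proposal is correct and follows essentially the same route as the paper: you get ampleness of $D-D_J$ from Proposition~\ref{prop:L-minus-DJ} because $|J|=m-q<m$, apply Serre duality, use toric Kodaira vanishing for $H^{n-q}(X,\cO_X(K_X+D-D_J))$ with $n-q>0$, and then use that $\cK_q$ is a direct sum of line bundles $\cO_X(D_J-D)$. One minor point on citations: Cox--Little--Schenck works over $\mathbb{C}$, so for arbitrary characteristic you should either cite a vanishing theorem proved over an arbitrary field (the paper uses Fujino's toric vanishing result) or rely on your combinatorial degree-by-degree argument.
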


\begin{proof}
Set $A_J=D-D_J$. Since $|J|=m-q<m$, Proposition~\ref{prop:L-minus-DJ} shows that $A_J$ is ample. By Serre duality,
\[
H^q\bigl(X,\cO_X(-A_J)\bigr)^\vee
\simeq
H^{n-q}\bigl(X,\cO_X(K_X+A_J)\bigr).
\]
Here $n-q>0$ because $q\leq m<n$. The group on the right vanishes by the toric Kodaira vanishing theorem (see, for example, \cite[Corollary~1.5]{fujino-multiplication}). The cited result is proved over an arbitrary field and in arbitrary characteristic.
\end{proof}

We also record the following simple fan-theoretic criterion.

\begin{proposition}\label{prop:ample-prime}
Let $X_\Sigma$ be a smooth complete toric variety of dimension $n$.  If an invariant prime divisor on $X_\Sigma$ is ample, then $X_\Sigma\simeq\PP^n_\Bbbk$.
\end{proposition}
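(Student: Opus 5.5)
Let $D_\rho$ be the ample invariant prime divisor, with ray generator $v=v_\rho$. Since $D_\rho$ is ample, it meets every invariant curve positively. In particular $D_\rho\cdot\gamma_R>0$ for every extremal ray $R$, and then \eqref{eq:D-upper} forces $D_\rho\cdot\gamma_R=1$. So $v$ occurs on the left-hand side of every extremal primitive relation \eqref{eq:primitive-relation}. The aim is to show that $\Sigma(1)$ consists of exactly $n+1$ rays. For a smooth complete fan this forces $\Sigma$ to be the fan of $\PP^n_\Bbbk$: the $n+1$ generators must satisfy a single relation $v_0+\cdots+v_n=0$ after a lattice change of coordinates.

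The cleanest route I would try is the cone-level argument. Take any maximal cone $\sigma\in\Sigma(n)$ not containing $v$. Because $X$ is smooth, the generators $u_1,\dots,u_n$ of $\sigma$ form a lattice basis. Write $v=\sum c_iu_i$, and write $D_\rho\sim\sum_{\rho'} a_{\rho'}D_{\rho'}$ with $a_{\rho'}=0$ for the rays of $\sigma$, which is possible by subtracting $\operatorname{div}(\chi^m)$.

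Now use ampleness through the support-function description. The piecewise linear function $\psi$ of $D_\rho$ is strictly convex and takes the value $1$ at $v$ (up to sign convention) and $0$ at every other ray generator. On the cone $\sigma$ the linear function $m_\sigma$ with $\psi|_\sigma=m_\sigma$ vanishes on all $u_i$, so $m_\sigma=0$. Strict convexity then says that $\psi(u)>m_\sigma(u)=0$ for every ray generator $u$ not in $\sigma$ (in the convention where ample means $\psi\ge m_\sigma$ with strict inequality off $\sigma$). The function $\psi$ vanishes at all ray generators except $v$. Hence every ray lies in $\sigma$ except $v$, so $\Sigma(1)=\{u_1,\dots,u_n,v\}$ has exactly $n+1$ elements. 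Completeness then forces all $c_i<0$, and the smoothness of the cones $\langle v,u_j : j\neq i\rangle$ forces all $c_i=-1$. This gives $v+u_1+\cdots+u_n=0$, which is the fan of $\PP^n_\Bbbk$.

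A cone $\sigma$ not containing $v$ exists: otherwise $v$ would lie in every maximal cone, which is impossible for a complete fan with $n\ge1$, since $-v$ lies in some cone not containing $v$.

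The main obstacle is getting the sign and strictness conventions for the support function right. Concretely, ampleness must correspond to the strict inequality for ray generators outside $\sigma$ with the sign that makes it contradict $\psi(u)=0$. I would fix the convention via \cite{cox-little-toric}: $D=\sum a_\rho D_\rho$ is ample iff for each $\sigma$ the element $m_\sigma$ with $\langle m_\sigma,u_\rho\rangle=-a_\rho$ on $\sigma$ satisfies $\langle m_\sigma,u_\rho\rangle>-a_\rho$ for $\rho\notin\sigma(1)$. With $a_\rho=0$ off $v$ and $m_\sigma=0$, this gives $0>0$ for any extra ray $u\neq v$ outside $\sigma$, a contradiction. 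This check is characteristic-free, so the argument holds over any algebraically closed $\Bbbk$.
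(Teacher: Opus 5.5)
Your proposal is correct and follows essentially the same route as the paper: choose a maximal cone $\sigma$ avoiding $v$, note $m_\sigma=0$, use strict convexity of the support function to force every ray other than $v$ into $\sigma$, and then use completeness and the smoothness of the adjacent cones $\langle v,u_j : j\neq i\rangle$ to get $v+u_1+\cdots+u_n=0$. The opening extremal-ray observation ($D_\rho\cdot\gamma_R=1$) is never used and can be dropped. No subtraction of $\operatorname{div}(\chi^m)$ is needed either, since $D_\rho$ already has zero coefficients along the rays of $\sigma$.
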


\begin{proof}
Let $D_{\rho_0}$ be ample, and write $v_0$ for the primitive generator of $\rho_0$.  Choose a maximal cone 
$\sigma=\langle v_1,\ldots,v_n\rangle$ 
not containing $\rho_0$; such a cone exists because the fan is complete. 
Set $\rho_i=\mathbb R_{\geq 0}v_i$ for $1\leq i\leq n$. 
Let $m_\sigma\in M$ be the Cartier datum of $D_{\rho_0}$ on $\sigma$.  All coefficients of $D_{\rho_0}$ along the rays of $\sigma$ are zero, 
so 
$\langle m_\sigma,v_i\rangle=0$ for 
$1\leq i\leq n$. 
Smoothness implies that $v_1,\ldots,v_n$ form a basis of $N$, and hence $m_\sigma=0$. 
Strict convexity of the support function of 
$D_{\rho_0}=\sum a_\rho D_\rho$, where $a_{\rho_0}=1$ and $a_\rho=0$ for $\rho\neq\rho_0$, gives 
\[
\langle m_\sigma,v_\rho\rangle>-a_\rho
\qquad(\rho\not\subset\sigma). 
\]
If a ray $\rho\neq\rho_0$ lay outside $\sigma$, then $m_\sigma=0$ and $a_\rho=0$ would give $0>0$, a contradiction. Therefore 
$\Sigma(1)=\{\rho_0,\rho_1,\ldots,\rho_n\}$. 

For each $i$, the facet of $\sigma$ opposite $v_i$ is contained in a second maximal cone, necessarily 
$\langle v_0,v_1,\ldots,\widehat{v_i},\ldots,v_n\rangle$. 
Write $v_0=-a_1v_1-\cdots-a_nv_n$. 
The two maximal cones lie on opposite sides of their common facet, so $a_i>0$; the smoothness of the second cone gives $a_i=1$.  Hence
\[
v_0+v_1+\cdots+v_n=0,
\]
and $\Sigma$ is the standard simplex fan.
\end{proof}

The next proposition packages the lifting argument used in the main theorem and its cohomological application.

\begin{proposition}[A common cohomological criterion]\label{prop:common-criterion}
Let $D$ be a Cartier divisor on $X$, and let $1\leq m<n$. Assume that $D\cdot\gamma_R\geq m$ for every extremal ray $R$ and that
\[
H^0\left(X,\bigwedge^mT_X\otimes\cO_X(-D)\right)\neq0.
\]
Then $X\simeq\PP^n_\Bbbk$ and, under this isomorphism, $\cO_X(D)\simeq\cO_{\PP^n_\Bbbk}(m)$.
\end{proposition}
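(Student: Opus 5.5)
We combine the twisted complex \eqref{eq:twisted-complex} with Lemmas~\ref{lem:lifting} and~\ref{lem:vanishing}. Since $m<n$ and $D\cdot\gamma_R\geq m$ for every extremal ray, Lemma~\ref{lem:vanishing} gives $H^q(X,\cK_q)=0$ for $1\leq q\leq m$. Lemma~\ref{lem:lifting} then shows that $H^0(X,\cK_0)\to H^0(X,\bigwedge^mT_X\otimes\cO_X(-D))$ is surjective. The target is nonzero by hypothesis, so $H^0(X,\cK_0)\neq0$. Now
\[
\cK_0=\bigwedge^m\cB\otimes\cO_X(-D)=\bigoplus_{|J|=m}\cO_X(D_J-D),
\]
so there is a subset $J\subset\Sigma(1)$ with $|J|=m$ and $H^0(X,\cO_X(D_J-D))\neq0$. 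In other words, $D_J-D\sim F$ for some effective divisor $F$.

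Next we use the numerics. Since $|J|=m$, Proposition~\ref{prop:L-minus-DJ} says $D-D_J=-F$ is nef, and Lemma~\ref{lem:effective-antinef} forces $F=0$. Hence $D\sim D_J$, and $D_J\cdot\gamma_R\geq m$ for every extremal ray $R$. By \eqref{eq:D-upper}, each of the $m$ summands satisfies $D_\rho\cdot\gamma_R\leq 1$. Equality must therefore hold throughout, so $D_\rho\cdot\gamma_R=1$ for every $\rho\in J$ and every extremal ray $R$.

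Fix $\rho\in J$; this is possible because $m\geq1$. Then $D_\rho\cdot\gamma_R=1>0$ on every extremal ray. Since $\NEbar(X)$ is generated by finitely many extremal rays, Kleiman's criterion shows $D_\rho$ is ample. Proposition~\ref{prop:ample-prime} then gives $X\simeq\PP^n_\Bbbk$.

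On $\PP^n_\Bbbk$ every invariant prime divisor is a hyperplane, so $D\sim D_J$ is linearly equivalent to $m$ hyperplanes, i.e.\ $\cO_X(D)\simeq\cO_{\PP^n_\Bbbk}(m)$. One can also see this directly: there is a single extremal ray, the line class, and each $D_\rho$ has degree $1$ on it.

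The main obstacle has already been handled by earlier results. The essential point is the characteristic-free exactness of the divided-power complex, together with the vanishing of $H^q(\cK_q)$; the latter needs $m<n$ so that $n-q>0$ in the Kodaira vanishing step. The only remaining subtlety is arguing $F=0$ rather than just $F$ effective. This comes from exactly the borderline case $|J|=m$ of Proposition~\ref{prop:L-minus-DJ}, which is why that case was included there.
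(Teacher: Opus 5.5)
Your proposal is correct and follows the paper's proof essentially step by step. You lift the section to $\bigwedge^m\cB\otimes\cO_X(-D)$ via Lemmas~\ref{lem:lifting} and~\ref{lem:vanishing}, extract $D_J\sim D+F$, force $F=0$ using nefness of $-F$ and Lemma~\ref{lem:effective-antinef}, and deduce that each $D_\rho$ with $\rho\in J$ is ample. The only cosmetic difference is that you invoke the borderline case $|J|=m$ of Proposition~\ref{prop:L-minus-DJ}, whereas the paper writes out the inequality $F\cdot\gamma_R\leq |I|-m=0$ directly.
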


\begin{proof}
By Lemmas~\ref{lem:lifting} and~\ref{lem:vanishing}, a nonzero section $s$ of $\bigwedge^mT_X\otimes\cO_X(-D)$ lifts through \eqref{eq:twisted-complex} to a nonzero section of $\bigwedge^m\cB\otimes\cO_X(-D)$. 
Since 
\[
\bigwedge^m\cB\simeq
\bigoplus_{\substack{I\subset\Sigma(1)\\|I|=m}}\cO_X(D_I), 
\]
some subset $I\subset\Sigma(1)$ with $|I|=m$ satisfies $H^0\bigl(X,\cO_X(D_I-D)\bigr)\neq0$. Let $F$ be the zero divisor of a nonzero section of this line bundle. Then
\[
D_I\sim D+F,\qquad F\geq0.
\]
For every extremal ray $R$, the numerical assumptions give
\[
F\cdot\gamma_R=D_I\cdot\gamma_R-D\cdot\gamma_R\leq |I|-m=0.
\]
Thus $-F$ is nef. Lemma~\ref{lem:effective-antinef} implies $F=0$, so $D_I\sim D$. It follows that
\[
m\leq D\cdot\gamma_R=\sum_{\rho\in I}D_\rho\cdot\gamma_R\leq m
\]
for every extremal ray $R$. Every summand in the middle is at most $1$; hence
\[
D_\rho\cdot\gamma_R=1
\qquad\text{for every }\rho\in I
\text{ and every extremal ray }R.
\]
Kleiman's criterion shows that every $D_\rho$ with $\rho\in I$ is ample. Proposition~\ref{prop:ample-prime} gives $X\simeq\PP^n_\Bbbk$. Each invariant prime divisor is then a hyperplane, and $D\sim D_I$ yields $\cO_X(D)\simeq\cO_{\PP^n_\Bbbk}(m)$.
\end{proof}

We record the anticanonical degree criterion used in both top-degree arguments. It is a very special case of \cite[Theorem 0.1]{fujino-notes}. 

\begin{lemma}\label{lem:large-degree}
If $-K_X\cdot\gamma_R\geq n+1$ for some extremal ray $R$, then $X\simeq\PP^n_\Bbbk$.
\end{lemma}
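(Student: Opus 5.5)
We will read the anticanonical degree directly off the extremal primitive relation \eqref{eq:primitive-relation} attached to $R$. Since $-K_X=\sum_{\rho\in\Sigma(1)}D_\rho$, the intersection numbers recorded after \eqref{eq:primitive-relation} give
\[
-K_X\cdot\gamma_R=h-\sum_{j=1}^k a_j .
\]
The primitive collection $P=\{v_1,\ldots,v_h\}$ consists of $h$ distinct rays, and every $h$ rays of $\Sigma$ are linearly independent when $h\leq n$; more to the point, any primitive collection in a smooth complete fan of dimension $n$ has $h\leq n+1$. Indeed, every proper subset of $P$ spans a cone of $\Sigma$, so $P\setminus\{v_1\}$ spans a smooth cone and has at most $n$ elements. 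Hence $-K_X\cdot\gamma_R\leq n+1$, with equality only if $h=n+1$ and $k=0$, i.e.\ the right-hand side of the relation is empty.

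So the hypothesis forces $h=n+1$ and $v_1+\cdots+v_{n+1}=0$. We then want to conclude that $\Sigma$ is the fan of $\PP^n_\Bbbk$. First, $\{v_2,\ldots,v_{n+1}\}$ spans an $n$-dimensional cone of $\Sigma$ because it is a proper subset of the primitive collection. By smoothness it is a $\ZZ$-basis of $N$, and the same holds for each $P\setminus\{v_i\}$. The $n+1$ cones $\langle P\setminus\{v_i\}\rangle$ are exactly the maximal cones of the standard fan of $\PP^n_\Bbbk$, whose support is all of $N_\RR$ because $v_1+\cdots+v_{n+1}=0$ with the $v_i$ spanning. Since $\Sigma$ is a fan containing these cones and their union already covers $N_\RR$, no other maximal cone can exist: any other cone would have to overlap one of them in its interior. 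Hence $\Sigma$ equals this fan, and $X\simeq\PP^n_\Bbbk$.

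The main point to get right is the bound $h\leq n+1$ and the fact that equality forces the empty right-hand side. This uses only the positivity $a_j>0$ and the definition of a primitive collection, and it is purely fan-theoretic, hence characteristic-free. This is consistent with the remark that the lemma is a special case of \cite[Theorem 0.1]{fujino-notes}, which one could alternatively simply cite.
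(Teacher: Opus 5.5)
Your proposal is correct and follows the paper's proof essentially step for step. You bound $h\leq n+1$ using a proper subset of the primitive collection, conclude that $h=n+1$, $k=0$ and $v_1+\cdots+v_{n+1}=0$, and then note that the $n+1$ simplex cones cover $N_\RR$ and hence exhaust $\Sigma$ by the fan axioms. One small correction: drop the aside that any $h\leq n$ rays of $\Sigma$ are linearly independent. It is false, e.g.\ $\pm e_1$ in the fan of $\PP^1\times\PP^1$, but you never use it.
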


\begin{proof}
Let 
$v_1+\cdots+v_h=a_1w_1+\cdots+a_kw_k$ 
be the extremal primitive relation corresponding to $R$. 
Its degree is 
$-K_X\cdot\gamma_R=h-\sum_{j=1}^k a_j$. 
Every proper subset of the primitive collection $\{v_1,\ldots,v_h\}$ spans a cone.  Hence $h-1\leq n$, and therefore
\[
n+1\leq-K_X\cdot\gamma_R
=h-\sum_{j=1}^k a_j
\leq h\leq n+1.
\]
All inequalities are equalities.  Consequently,
\begin{equation}\label{eq:simplex-relation}
h=n+1,
\qquad k=0,
\qquad v_1+\cdots+v_{n+1}=0.
\end{equation}

Since $\{v_1,\ldots,v_{n+1}\}$ is a primitive collection, every $n$-element subset spans a maximal cone.  In particular, $v_1,\ldots,v_n$ form a lattice basis and 
$v_{n+1}=-(v_1+\cdots+v_n)$. 
These $n+1$ maximal cones cover $N_\RR$.  Indeed, write 
$x=c_1v_1+\cdots+c_nv_n$, 
set $c_{n+1}:=0$, 
and put $m=\min\{c_1,\ldots,c_{n+1}\}$.  Using \eqref{eq:simplex-relation}, we obtain
\[
x=\sum_{i=1}^{n+1}(c_i-m)v_i.
\]
All coefficients are nonnegative and at least one is zero, so $x$ belongs to the cone generated by all but one of the $v_i$.  Thus these cones form the complete simplex fan.  Since they are cones of $\Sigma$, the fan intersection property implies that they exhaust $\Sigma$.  Hence $X\simeq\PP^n_\Bbbk$, as required.
\end{proof}

\section{Proof of Theorem~\ref{thm:main}}\label{sec:main-proof}

We first recall a standard criterion for the triviality of vector
bundles on projective space for the reader's convenience; cf.\
\cite[p.~51, Theorem~3.2.1]{OSS80}. We include the argument to make it explicit that the proof is purely algebraic and works over a field of arbitrary characteristic. 

\begin{lemma}\label{lem:trivial-on-lines}
Let $x\in\mathbb P^n_\Bbbk$, and let $\mathcal V$ be a vector bundle on
$\mathbb P^n_\Bbbk$. If $\mathcal V|_\ell$ is trivial for every line
$\ell\subset\mathbb P^n_\Bbbk$ passing through $x$, then $\mathcal V$ is
trivial.
\end{lemma}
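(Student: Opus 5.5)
We follow the standard argument via the blow-up at $x$, written so that only algebraic tools (base change, cohomology and base change, descent along a $\PP^1$-bundle) are used. Let $\beta\colon Y=\operatorname{Bl}_x\PP^n_\Bbbk\to\PP^n_\Bbbk$ be the blow-up with exceptional divisor $E\simeq\PP^{n-1}_\Bbbk$. The projection from $x$ makes $Y$ a $\PP^1$-bundle $q\colon Y\to\PP^{n-1}_\Bbbk$, namely $Y\simeq\PP(\cO\oplus\cO(1))$. Its fibers are the strict transforms $\tilde\ell$ of the lines $\ell$ through $x$, each mapped isomorphically onto $\ell$ by $\beta$. The divisor $E$ is a section of $q$.

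Put $\mathcal W=\beta^*\mathcal V$. For each fiber, $\mathcal W|_{\tilde\ell}\simeq\mathcal V|_\ell$ is trivial. Hence $q_*\mathcal W$ is locally free of rank $\operatorname{rank}\mathcal V$: the fiberwise $h^0$ is constant and $h^1=0$, so cohomology and base change applies in any characteristic. The natural map $q^*q_*\mathcal W\to\mathcal W$ is an isomorphism, since it is an isomorphism on each fiber. Restricting to the section $E$, which $q$ maps isomorphically onto $\PP^{n-1}_\Bbbk$, gives
\[
q_*\mathcal W\simeq(q^*q_*\mathcal W)|_E\simeq\mathcal W|_E=(\beta^*\mathcal V)|_E\simeq\cO_E\otimes_\Bbbk\mathcal V(x),
\]
because $\beta$ contracts $E$ to the point $x$. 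Thus $q_*\mathcal W$ is trivial, and therefore $\mathcal W\simeq\cO_Y^{\oplus r'}$ with $r'=\operatorname{rank}\mathcal V$.

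It remains to descend triviality along $\beta$. Since $\beta_*\cO_Y=\cO_{\PP^n}$ ($\PP^n$ is normal and $\beta$ is birational and proper), the projection formula gives
\[
\mathcal V\simeq\beta_*\beta^*\mathcal V\simeq\beta_*\cO_Y^{\oplus r'}\simeq\cO_{\PP^n}^{\oplus r'}.
\]
Alternatively, the $r'$ global sections of $\mathcal W$ come from $H^0(\mathcal V)$ and give a map $\cO^{\oplus r'}\to\mathcal V$ that is an isomorphism away from $x$. Its determinant is then a section of $\det\mathcal V$ that is nowhere zero off a point, so it is nowhere zero when $n\geq2$. The case $n=1$ is trivial, since the only line is $\PP^1$ itself.

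The main point needing care is the cohomology-and-base-change step showing that $q_*\mathcal W$ is locally free and $q^*q_*\mathcal W\to\mathcal W$ is an isomorphism. This is characteristic-free because $q$ is flat and projective with reduced base, and the fiberwise $h^0$ is constant.
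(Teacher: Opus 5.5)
Your proposal is correct and follows the paper's proof essentially verbatim. Both blow up $x$, use cohomology and base change along the $\PP^1$-bundle $q$ to get $\beta^*\mathcal V\simeq q^*q_*\beta^*\mathcal V$, show $q_*\beta^*\mathcal V$ is trivial by restricting to the exceptional divisor, and descend via $\beta_*\cO_Y=\cO_{\PP^n}$ and the projection formula; your alternative determinant-based descent for $n\geq2$ is also valid but not needed.
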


\begin{proof}
For $n=1$, the assertion is immediate. Assume that $n\geq2$. 
Let 
$\mu:\widetilde{\mathbb P}^n_\Bbbk
=\operatorname{Bl}_x\mathbb P^n_\Bbbk
\to\mathbb P^n_\Bbbk$ 
be the blow-up at $x$, and let 
$q:\widetilde{\mathbb P}^n_\Bbbk\to\mathbb P^{n-1}_\Bbbk$ 
be the morphism induced by projection from $x$. Put
$\mathcal H=\mu^*\mathcal V$.

Every fiber of $q$ is mapped isomorphically by $\mu$ onto a line
through $x$. Hence $\mathcal H$ is trivial on every fiber of $q$.
By cohomology and base change, 
$\mathcal W:=q_*\mathcal H$ is locally free. The natural
evaluation morphism 
$q^*\mathcal W\to\mathcal H$ 
is an isomorphism, 
since its restriction to every fiber of $q$ 
is an isomorphism.

Let $S=\mu^{-1}(x)$ be the exceptional divisor. Since
$q|_S:S\to\mathbb P^{n-1}_\Bbbk$ is an isomorphism and $\mu|_S$ is
constant with image $x$, we have
\[
(q|_S)^*\mathcal W
\simeq
\mathcal H|_S
\simeq
\mathcal V|_x\otimes_\Bbbk\mathcal O_S.
\]
Thus $\mathcal W$ and $\mathcal H$ are trivial. Finally,
$\mu_*\mathcal O_{\widetilde{\mathbb P}^n_\Bbbk}
=\mathcal O_{\mathbb P^n_\Bbbk}$ and the projection formula give
\[
\mathcal V
\simeq
\mu_*\mu^*\mathcal V
\simeq
\mathcal O_{\mathbb P^n_\Bbbk}^{\oplus\operatorname{rank}\mathcal V}.
\] This completes the proof.
\end{proof}

\begin{proof}[Proof of Theorem~\ref{thm:main}]
Let $\mathcal E\hookrightarrow T_X$ be as in the statement, and put
$r=\operatorname{rank}\mathcal E$ and $L=\det\mathcal E$. Since
$\mathcal E$ injects into $T_X$, we have $r\leq n$. We divide the
proof into two cases.

\medskip
\noindent
\textit{Case 1: $r<n$.}
The injection $\mathcal E\hookrightarrow T_X$ induces a nonzero
morphism 
$L=\det\mathcal E\to\bigwedge^rT_X$,
or equivalently a nonzero section
\[
0\neq s\in
H^0\left(
X,\bigwedge^rT_X\otimes L^{-1}
\right).
\]
Apply Proposition~4.3 with $m=r$ and a Cartier divisor representing
$L$. The numerical hypothesis is given by (2.3). We obtain 
$X\simeq\mathbb P^n_\Bbbk$ and 
$L\simeq\mathcal O_{\mathbb P^n_\Bbbk}(r)$.

\medskip
\noindent
\textit{Case 2: $r=n$.}
Taking determinants of $\mathcal E\hookrightarrow T_X$ gives a
nonzero morphism 
$L\to
\det T_X\simeq\mathcal O_X(-K_X)$. 
If $F$ is its zero divisor, then
\begin{equation}\label{eq:determinant-divisor}
-K_X\sim L+F,
\qquad
F\geq0.
\end{equation}

Suppose first that $F\neq0$. By Lemma~2.2, $-F$ is not nef. Since
$\NEbar(X)$ is generated by its extremal
rays, there is an extremal ray $R$ such that
$F\cdot\gamma_R>0$. Since $F\cdot\gamma_R$ is a positive integer,
(2.3) and \eqref{eq:determinant-divisor} give
\[
-K_X\cdot\gamma_R
=
L\cdot\gamma_R+F\cdot\gamma_R
\geq n+1.
\]

Suppose next that $F=0$. The determinant of the morphism
$\mathcal E\to T_X$ is then nowhere vanishing, so the morphism is an
isomorphism. In particular, $\mathcal E\simeq T_X$ and $T_X$ is
ample. Choose any extremal ray $R$ and an invariant curve
$C_R\simeq\mathbb P^1_\Bbbk$ representing $\gamma_R$. Write
\[
T_X|_{C_R}
\simeq
\mathcal O_{\mathbb P^1_\Bbbk}(b_1)\oplus\cdots\oplus
\mathcal O_{\mathbb P^1_\Bbbk}(b_n),
\qquad
b_i\geq1.
\]
The differential of the closed immersion $C_R\hookrightarrow X$
gives a nonzero injection 
$T_{C_R}\simeq\mathcal O_{\mathbb P^1_\Bbbk}(2)
\to T_X|_{C_R}$.  
If all $b_i$ were equal to $1$, this morphism would be zero. Hence
at least one $b_i$ is at least $2$, and
\[
-K_X\cdot\gamma_R
=
\deg(T_X|_{C_R})
=
\sum_{i=1}^n b_i
\geq n+1.
\]

Thus, in either case, there is an extremal ray $R$ such that
$-K_X\cdot\gamma_R\geq n+1$. Lemma~4.4 gives
$X\simeq\mathbb P^n_\Bbbk$.

It remains to determine $\mathcal E$. If $r=n$ and $F=0$, the given
morphism is already an isomorphism 
$\cE\simeq T_{\mathbb P^n_\Bbbk}$. 
We may therefore assume that either $r<n$, or that $r=n$ and
$F\neq0$.

In the first case, Proposition~4.3 gives
$L\simeq\mathcal O_{\mathbb P^n_\Bbbk}(r)$. In the second case, write 
$L\simeq\mathcal O_{\mathbb P^n_\Bbbk}(d)$ 
and 
$F\sim aH$, 
where $H$ is a hyperplane and $a\geq1$. Since $\mathcal E$ is ample
of rank $n$, its restriction to a line shows that $d\geq n$.
Equation \eqref{eq:determinant-divisor} gives 
$n+1=d+a$, 
and hence $d=n$ and $a=1$. Thus, in every remaining case, 
$L\simeq\mathcal O_{\mathbb P^n_\Bbbk}(r)$. 

For any line $\ell\subset\mathbb P^n_\Bbbk$, Grothendieck's splitting
theorem gives
\[
\mathcal E|_\ell
\simeq
\mathcal O_\ell(a_1)\oplus\cdots\oplus
\mathcal O_\ell(a_r).
\]
Since $\mathcal E$ is ample, $a_i\geq1$ for every $i$, while 
$\sum_{i=1}^r a_i
=
\deg(L|_\ell)
=
r$. 
Therefore $a_i=1$ for every $i$. Hence
$\mathcal E(-1)$ is trivial on every line in $\mathbb P^n_\Bbbk$, and
Lemma~\ref{lem:trivial-on-lines} yields
\[
\mathcal E\simeq
\mathcal O_{\mathbb P^n_\Bbbk}(1)^{\oplus r}.
\]
This completes the proof.
\end{proof}

\begin{remark}\label{rem:nonequivariant}
The toric part of the proof uses only the positivity of the
restrictions of $\cE$ to invariant rational curves and the
determinant map induced by
$\cE\hookrightarrow T_X$. No torus-equivariant structure on
$\cE$, or equivariance of the inclusion, is needed. Once
$X\simeq\mathbb P^n_\Bbbk$ is known, the structure of $\cE$ is
determined by its restrictions to lines.
\end{remark}

\section{Proof of Theorem~\ref{thm:beauville}}\label{sec:beauville}

We prove Theorem~\ref{thm:beauville} by the same method as Theorem~\ref{thm:main}. We first record the elementary calculation needed in the top-degree case. Although Lemma~\ref{lem:index-n} follows easily from \cite[Theorem~3.2\,(1)]{fujino-osaka} or \cite[Theorem~1.3]{fujinosato-notes2}, we include a direct proof for the reader's convenience. 

\begin{lemma}\label{lem:index-n}
Assume $n\geq2$, and let $A$ be an ample Cartier divisor on $X$. If $-K_X\sim nA$, then $n=2$ and
\[
(X,\cO_X(A))\simeq
(\PP^1_\Bbbk\times\PP^1_\Bbbk,\cO(1,1)).
\]
\end{lemma}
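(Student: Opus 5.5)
We work with an extremal ray $R$ and its extremal primitive relation $v_1+\cdots+v_h=a_1w_1+\cdots+a_kw_k$ as in \eqref{eq:primitive-relation}. Since $A\cdot\gamma_R\geq1$, we get $-K_X\cdot\gamma_R=n(A\cdot\gamma_R)\geq n$. On the other hand, the proof of Lemma~\ref{lem:large-degree} gives $-K_X\cdot\gamma_R=h-\sum a_j\leq h\leq n+1$. Suppose the degree is $n+1$. Then Lemma~\ref{lem:large-degree} gives $X\simeq\PP^n_\Bbbk$, so $-K_X=(n+1)H$ and $nA\sim(n+1)H$. This forces $n\mid n+1$, which is impossible for $n\geq2$. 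Hence $-K_X\cdot\gamma_R=n$ and $A\cdot\gamma_R=1$ for every extremal ray. Now $h-\sum a_j=n$ with $h\leq n+1$, so there are two possibilities. Either $h=n+1$ and the right-hand side is a single $w$ with $a=1$, or $h=n$ and $k=0$.

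In the first case the $h=n+1$ rays $v_i$ together with $w$ would require $n+2$ rays related by a relation in which every $n$-subset of $\{v_i\}$ spans a cone. I would rule this out, or reduce it to the second case, by choosing the extremal ray suitably. The cleaner route is to pick any extremal ray $R$ and argue directly. If $h=n$, $k=0$, then $v_1+\cdots+v_n=0$, so by Reid/Batyrev theory the contraction of $R$ is a fiber type contraction with fibers $\PP^{n-1}_\Bbbk$ onto a smooth toric variety of dimension $1$, namely $\PP^1_\Bbbk$. If $h=n+1$, $k=1$, $a_1=1$, then $v_1+\cdots+v_{n+1}=w$. Here I expect to show that the quotient fan has dimension $\le 0$, which is a contradiction. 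Alternatively, the contraction is birational with exceptional locus $D_w$, and it contracts $D_w$ to a point with $\PP^{n}$-fibers. But a fiber of dimension $n$ inside an $n$-fold is impossible. So this case cannot occur, since the fiber dimension $h-1=n$ exceeds the dimension of the exceptional divisor. In either reading, $X$ becomes a $\PP^{n-1}_\Bbbk$-bundle over $\PP^1_\Bbbk$. Being smooth toric, it is $X\simeq\PP_{\PP^1}(\cO\oplus\cO(b_2)\oplus\cdots\oplus\cO(b_n))$ with $0\leq b_2\leq\cdots\leq b_n$, and the fan is explicit: $v_1,\dots,v_n$ with $\sum v_i=0$, and $u_\pm$ with $u_++u_-=\sum b_iv_i$.

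Finally I compute. Let $f$ be the fiber class and $\xi$ the tautological class. Then $\operatorname{Pic}X=\ZZ\xi\oplus\ZZ f$, and $-K_X=n\xi+(2-\sum b_i)f$. The condition $-K_X=nA$ forces $n\mid 2-\sum b_i$. Let $C_0$ be the minimal section, corresponding to the quotient $\cO$, so that $\xi\cdot C_0=0$ in the normalization with $b_i\geq0$. Writing $A=\xi+cf$, ampleness gives $A\cdot C_0=c>0$. Then $2-\sum b_i=nc\geq n$, so $\sum b_i\leq 2-n\leq0$. Hence all $b_i=0$ and $n\leq2$, giving $n=2$ and $c=1$. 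Thus $X=\PP^1\times\PP^1$ and $A=\xi+f$, which is $\cO(1,1)$. The main obstacle is the clean exclusion of the case $h=n+1$, $k=1$; I would handle it via the fiber-dimension argument using the toric description of the contraction of an extremal ray.
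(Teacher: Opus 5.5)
Your proposal is correct and follows the paper's proof step for step. Your fiber-dimension exclusion of $h=n+1$, $k=1$ (fibers of dimension $h-1$ inside the exceptional locus $V(\langle w_1,\ldots,w_k\rangle)$, which has dimension $n-k$) is the same inequality $h+k-1\leq n$ that the paper reads off from Reid's description $\langle v_1,\ldots,\widehat{v_i},\ldots,v_h,w_1,\ldots,w_k\rangle\in\Sigma$. The $\PP^{n-1}$-bundle structure and the minimal-section computation are also the paper's, and excluding degree $n+1$ needs no appeal to Lemma~\ref{lem:large-degree}, since $-K_X\cdot\gamma_R=n(A\cdot\gamma_R)$ is divisible by $n$ and $n<n+1<2n$ for $n\geq2$.
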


\begin{proof}
Choose an extremal ray $R$, with primitive relation \eqref{eq:primitive-relation}. Since $A\cdot\gamma_R$ is a positive integer,
\[
n\leq n(A\cdot\gamma_R)=-K_X\cdot\gamma_R
=h-\sum_{j=1}^k a_j\leq h\leq n+1.
\]
As $n\geq2$, this anticanonical degree must be $n$.

Reid's description of an extremal primitive relation shows that
\[
\langle v_1,\ldots,\widehat v_i,\ldots,v_h,w_1,\ldots,w_k\rangle\in\Sigma
\qquad(1\leq i\leq h);
\]
see \cite[Theorem~1.5]{casagrande-contractible}. Thus $h+k-1\leq n$. If $k>0$, then
\[
n=h-\sum_{j=1}^k a_j\leq h-k\leq n+1-2k<n,
\]
a contradiction. Hence $k=0$, $h=n$, and the relation is 
$v_1+\cdots+v_n=0$. 
Since this relation is extremal, its contraction is a $\PP^{n-1}_\Bbbk$-bundle over a smooth complete toric curve, hence over $\PP^1_\Bbbk$; see \cite[Corollary~2.4]{casagrande-contractible}. This is the standard fan-theoretic description of the contraction and is valid in arbitrary characteristic.

Using the convention that $\PP(\mathcal V)$ parametrizes one-dimensional quotients of $\mathcal V$, Grothendieck's splitting theorem and a twist on the base give
\[
X\simeq
\PP_{\PP^1_\Bbbk}\left(\bigoplus_{i=1}^n\cO_{\PP^1_\Bbbk}(d_i)\right),
\qquad 0=d_1\leq d_2\leq\cdots\leq d_n.
\]
Let $H$ be the tautological divisor class, and let $G$ be the pullback of the class of a point on $\PP^1_\Bbbk$. The canonical divisor formula gives
\[
-K_X\sim nH+\left(2-\sum_{i=1}^n d_i\right)G.
\]
Let $C_0$ be the section corresponding to the quotient onto $\cO_{\PP^1_\Bbbk}(d_1)=\cO_{\PP^1_\Bbbk}$. Then $H\cdot C_0=0$ and $G\cdot C_0=1$, so
\[
n\leq n(A\cdot C_0)=-K_X\cdot C_0
=2-\sum_{i=1}^n d_i\leq2.
\]
Therefore $n=2$ and $d_1=d_2=0$, proving that $X\simeq\PP^1_\Bbbk\times\PP^1_\Bbbk$. Finally, $2A\sim-K_X$ and $\operatorname{Pic}(X)\simeq\ZZ^2$ give $\cO_X(A)\simeq\cO(1,1)$.
\end{proof}

\begin{proof}[Proof of Theorem~\ref{thm:beauville}]
If $n=1$, then $X\simeq\PP^1_\Bbbk$, $p=1$, and $\cL\simeq\cO_{\PP^1_\Bbbk}(d)$ for an integer $d>0$. The hypothesis says $H^0(\PP^1_\Bbbk,\cO_{\PP^1_\Bbbk}(2-d))\neq0$, so $d=1$ or $2$. We may therefore assume $n\geq2$. Choose an ample Cartier divisor $A$ with $\cL\simeq\cO_X(A)$.

\medskip
\noindent\emph{Case 1: $p<n$.}

Apply Proposition~\ref{prop:common-criterion} with $D=pA$ and $m=p$, using \eqref{eq:polarization-lower}. We obtain $X\simeq\PP^n_\Bbbk$ and $\cL^{\otimes p}\simeq\cO_{\PP^n_\Bbbk}(p)$. Since $\operatorname{Pic}(\PP^n_\Bbbk)\simeq\ZZ$, this gives $\cL\simeq\cO_{\PP^n_\Bbbk}(1)$.

\medskip
\noindent\emph{Case 2: $p=n$.}

The nonzero section is a section of $\cO_X(-K_X-nA)$. Its zero divisor $F$ satisfies
\begin{equation}\label{eq:beauville-zero-divisor}
-K_X\sim nA+F,\qquad F\geq0.
\end{equation}
If $F\neq0$, Lemma~\ref{lem:effective-antinef} shows that $-F$ is not nef. Since $\NEbar(X)$ is generated by its extremal rays, some extremal ray $R$ satisfies $F\cdot\gamma_R>0$. Integrality gives
\[
-K_X\cdot\gamma_R=n(A\cdot\gamma_R)+F\cdot\gamma_R\geq n+1.
\]
Lemma~\ref{lem:large-degree} yields $X\simeq\PP^n_\Bbbk$. Write $\cL\simeq\cO_{\PP^n_\Bbbk}(d)$, with $d\geq1$. Since $F$ is a nonzero effective divisor, \eqref{eq:beauville-zero-divisor} gives $n+1-nd>0$, and hence $d=1$.
If $F=0$, then $-K_X\sim nA$, and Lemma~\ref{lem:index-n} gives precisely case \textup{(iii)}.

For the converse, the Euler sequence on $\PP^n_\Bbbk$ shows that $T_{\PP^n_\Bbbk}(-1)$ is globally generated. Hence its nonzero exterior powers have nonzero global sections. In cases \textup{(ii)} and \textup{(iii)}, one has $p=n$ and $\bigwedge^nT_X\otimes\cL^{-n}\simeq\cO_X$.
\end{proof}

\begin{remark}[Dependence on the characteristic]\label{rem:characteristic-dependence}
For general torus-equivariant vector bundles, even the existence of a nonzero global section can depend on the characteristic. This occurs for the fibers of a single equivariant vector bundle on a smooth projective toric scheme over $\ZZ$, as the following example shows.

Let $Y=\PP^1_\Bbbk\times\PP^1_\Bbbk$, and let $\pi\colon S\to Y$ be the blow-up of its four torus-fixed points $(0,0)$, $(0,\infty)$, $(\infty,0)$, and $(\infty,\infty)$. Write $E_1,\ldots,E_4$ for the exceptional curves, set $E=E_1+\cdots+E_4$, and put $\mathcal V=T_S\otimes\cO_S(-E)$. Then
\begin{equation}\label{eq:characteristic-jump}
h^0(S,\mathcal V)=
\begin{cases}
0,&\operatorname{char}\Bbbk\neq2,\\
1,&\operatorname{char}\Bbbk=2.
\end{cases}
\end{equation}
Indeed, a section $s$ of $T_S(-E)$, viewed as a vector field on $S$, vanishes along $E$. Away from $E$, it gives a vector field on the complement of the four centers in $Y$. Since $T_Y$ is locally free and the omitted locus has codimension two, this vector field extends uniquely to $v\in H^0(Y,T_Y)$. The identity $d\pi(s)=\pi^*v$ shows that $v$ vanishes at all four centers. In the usual affine coordinates $x,y$, it therefore has the form $v=ax\partial_x+by\partial_y$, with $a,b\in\Bbbk$.

At $(0,0)$, the lift to the blow-up chart $y=xt$ is
\[
\widetilde v=ax\partial_x+(b-a)t\partial_t.
\]
Together with the other chart, this shows that the lift vanishes along the exceptional curve if and only if $a=b$. At the other centers, replacing $x$ or $y$ by its inverse changes the corresponding coefficient to its negative. Thus the four vanishing conditions are exactly $a-b=0$ and $a+b=0$. Conversely, these equations ensure that the lifts vanish along all four exceptional curves in both charts. Hence
\[
H^0(S,\mathcal V)\simeq
\{(a,b)\in\Bbbk^2\mid a-b=a+b=0\},
\]
which proves \eqref{eq:characteristic-jump}. In characteristic two, the lift of $x\partial_x+y\partial_y$ gives the nonzero section.

Both $S$ and $\mathcal V$ arise from the same construction over $\ZZ$, using the relative tangent bundle. Thus the jump does not result from choosing unrelated bundles in different characteristics. Finally, $E\cdot E_i=-1$, so $\cO_S(E)$ is not ample; this example does not satisfy the polarization hypothesis of Theorem~\ref{thm:beauville}.
\end{remark}

\begin{remark}[The toric hypothesis in positive characteristic]\label{rem:positive-characteristic-counterexamples}
The following example shows that the toric hypothesis cannot be omitted from either Theorem~\ref{thm:main} or Theorem~\ref{thm:beauville} in arbitrary characteristic. The example goes back to Wahl \cite[p.~316]{wahl-cohomological}; see also Furukawa \cite[Example~2.2, Proposition~2.5, and Remark~2.8]{furukawa-hyperquadrics}. Furukawa further proves that, among nonlinear smooth complete intersections of dimension at least two, the condition $H^0(X,T_X(-1))\neq0$ characterizes precisely the odd-dimensional quadrics in characteristic two \cite[Theorem~1.1]{furukawa-hyperquadrics}.

Suppose that $\operatorname{char}\Bbbk=2$, and let $d\geq3$ be odd. Consider
\[
Q=\{f=0\}\subset\PP^{d+1}_\Bbbk,
\qquad
f=z_0^2+z_1z_2+z_3z_4+\cdots+z_dz_{d+1}.
\]
The partial derivatives with respect to $z_1,\ldots,z_{d+1}$ do not vanish simultaneously on $Q$, so $Q$ is smooth. Via the Euler sequence restricted to $Q$ and twisted by $\cO_Q(-1)$, the constant vector $e_0=(1,0,\ldots,0)$ defines a section of $T_{\PP^{d+1}_\Bbbk}(-1)|_Q$. It is nowhere zero, since its only possible zero in $\PP^{d+1}_\Bbbk$ is $[e_0]$, which does not lie on $Q$. Moreover, $\partial f/\partial z_0=0$, so the twisted normal sequence
\[
0\longrightarrow T_Q(-1)
\longrightarrow T_{\PP^{d+1}_\Bbbk}(-1)|_Q
\xrightarrow{\,df\,}\cO_Q(1)\longrightarrow0
\]
shows that this section lies in $H^0(Q,T_Q(-1))$. Being nowhere vanishing, it yields an ample line subbundle $\cO_Q(1)\hookrightarrow T_Q$.

Since $Q\not\simeq\PP^d_\Bbbk$, this contradicts the conclusion of Theorem~\ref{thm:main} when the toric hypothesis is removed. Taking $\cL=\cO_Q(1)$ and $p=1<d$ also gives a counterexample to the positive-characteristic analogue of Beauville's characterization: its quadric alternative is allowed only in top exterior degree. Unlike the example in Remark~\ref{rem:characteristic-dependence}, the polarization here is ample, but the variety is not toric. Thus the two characterizations hold in the toric setting in every characteristic even though their general positive-characteristic analogues fail.
\end{remark}

\bibliographystyle{amsalpha}
\bibliography{ref}

\end{document}